\DeclareMathAlphabet{\mathpzc}{OT1}{pzc}{m}{it}
\newcommand{\ncom}{\newcommand}
\ncom{\rar}{\rightarrow}
\ncom{\imply}{\Rightarrow}
\ncom{\lrar}{\longrightarrow}
\ncom{\into}{\hookrightarrow}
\ncom{\onto}{\twoheadrightarrow}
\ncom{\ov}{\overline}
\ncom{\m}{\mbox}
\ncom{\sta}{\stackrel}
\ncom{\invlim}{\varprojlim}
\ncom{\xhat}{\widehat}
\ncom{\vspc}{\vspace{3mm}}
\ncom{\End}{{\cE}nd}
\ncom{\tensor}{\otimes}
\ncom{\al}{\alpha}
\ncom{\cHom}{{\mathcal Hom}}
\ncom{\A}{{\mathbb A}}
\ncom{\comx}{{\mathbb C}}
\ncom{\E}{{\mathbb E}}
\ncom{\F}{{\mathbb F}}
\ncom{\G}{{\mathbb G}}
\ncom{\K}{{\mathbb K}}
\ncom{\Le}{{\mathbb L}}
\ncom{\N}{{\mathbb N}}
\ncom{\p}{{\mathbb P}}
\ncom{\Q}{{\mathbb Q}}
\ncom{\R}{{\mathbb R}}
\ncom{\Z}{{\mathbb Z}}
\ncom{\f}{\dfrac}
\ncom{\wtil}{\widetilde}
\ncom{\ci}{{\mathpzc i}}
\ncom{\cA}{{\mathcal A}}
\ncom{\cC}{{\mathcal C}}
\ncom{\cE}{{\mathcal E}}
\ncom{\cF}{{\mathcal F}}
\ncom{\cG}{{\mathcal G}}
\ncom{\cH}{{\mathcal H}}
\ncom{\cI}{{\mathcal I}}
\ncom{\cJ}{{\mathcal J}}
\ncom{\cL}{{\mathcal L}}
\ncom{\cM}{{\mathcal M}}
\ncom{\cN}{{\mathcal N}}
\ncom{\cO}{{\mathcal O}}
\ncom{\cP}{{\mathcal P}}
\ncom{\cQ}{{\mathcal Q}}
\ncom{\cR}{{\mathcal R}}
\ncom{\cS}{{\mathcal S}}
\ncom{\cT}{{\mathcal T}}
\ncom{\cU}{{\mathcal U}}
\ncom{\cV}{{\mathcal V}}
\ncom{\cW}{{\mathcal W}}
\ncom{\cX}{{\mathcal X}}
\ncom{\cY}{{\mathcal Y}}
\ncom{\cZ}{{\mathcal Z}}
\ncom{\cSU}{{\mathcal S \mathcal U}}
\ncom{\eop}{{\hfill $\Box$}}
\ncom{\isom}{\cong}
\ncom{\todo}{{\textbf{TODO}}}
\newtheorem{theorem}{Theorem}[section]
\newtheorem{lemma}[theorem]{Lemma}
\newtheorem{corollary}[theorem]{Corollary}
\newtheorem{definition}[theorem]{Definition}
\newtheorem{question}[theorem]{Question}
\newtheorem{answer}[theorem]{Answer}
\begin{document}
\baselineskip=16pt

\title{Splitting of low rank ACM bundles on hypersurfaces of high dimension}
\author{Amit Tripathi}
\address{Department of Mathematics, 
Indian Statistical Institute, Bangalore - 560059, India}
\email{amittr@gmail.com}
\thanks{This work was supported by a postdoctoral fellowship from NBHM, Department of Atomic Energy.}

\subjclass{14J60}
\keywords{Vector bundles, hypersurfaces, arithmetically Cohen-Macaulay}

\begin{abstract} Let $X$ be a smooth projective hypersurface. We derive a splitting criterion for arithmetically Cohen-Macaulay bundles over $X$. As an application we show that any rank 3 ACM vector bundle over $X$ splits when $\text{dim}\,X \geq 7$. We also derive a splitting result for rank 4 arithmetically Cohen-Macaulay bundles.
\end{abstract}
\maketitle
\section{Introduction}

Let $X \subset \p^{n+1}$ be a smooth hypersurface where $n \geq 3$. We set a conventional notation $$H^i_*(X, \cF) := \bigoplus_{m \in \Z} H^i(X, \cF(m))$$ where $\cF$ denotes a coherent sheaf on $X$. 

By Grothendieck-Lefschetz theorem \cite{SGA7II}, we know the structure of the set of all line bundles on $X$. Vector bundles over $X$ are not so well understood. An obvious question about vector bundles on any projective variety is the splitting problem - When can we say that a given vector bundle is a direct sum of line bundles? The proper objects in the category of vector bundles over $X$ to look for the splitting behaviour are arithmetically Cohen-Macaulay bundles. We recall the definition,
\begin{definition}
An \textit{arithmetically Cohen-Macaulay} (ACM) bundle on $X$ is a vector bundle $E$ satisfying $$H^i(X, E(m)) = 0, \,\, \forall \, m \in \Z \,\,\text{and} \,\,\, 0 < i < \text{dim}\, X$$
\end{definition} 

One can easily check that split bundles on a hypersurface are ACM bundles. The importance of this definition lies in a well known criterion of Horrocks \cite{Hor} - ACM bundles are precisely the bundles on $\p^n$ that are split. Viewing $\p^n$ as a hypersurface of degree 1 in $\p^{n+1}$, one may ask if for hypersurfaces with degree $d > 1$, such a splitting holds. 

When $d > 1$, there exists indecomposable ACM bundles on hypersurfaces (see \cite{M-R-R} for a specific example or \cite{M-R-R3} for a class of examples), though several splitting results are available for various degrees and ranks. In particular, fixing $d=2$, the ACM bundles on quadrics have been completely classified, see \cite{Kno}. The case of cubic surfaces in $\p^3$ has been investigated in \cite{C-H}.

In a different direction, we can fix the rank of the bundle and let degree vary.  Here the conjectural picture is that any ACM bundle of a fixed rank, over a sufficiently high dimensional hypersurface (irrespective of its degree) is split. The precise conjecture is,

\begin{proof}[Conjecture (Buchweitz, Greuel and Schreyer \cite{BGS}):] Let $X \subset \p^n$ be a hypersurface. Let $E$ be an ACM bundle on $X$. If rank $E < 2^e$, where $e = \left [ \displaystyle{\frac{n-2}{2}} \right ]$, then $E$ splits. (Here $[q]$ denotes the largest integer $\leq q$.) 
\end{proof} 

Splitting of ACM bundles of rank 2 on hypersurfaces have been understood fairly well. We summarize the results known. When $d = 1$, splitting follows by the  Horrock's criterion, so we assume $d \geq 2$. Let $E$ be a rank 2 ACM bundle on $X$, then $E$ splits if,

\begin{enumerate}
\item If dim$(X) \geq 5$ (see \cite{Kle} and \cite{M-R-R}).
\item If dim$(X) = 4$ and $X$ is general hypersurface and $d \geq 3$ (see \cite{M-R-R} and \cite{R}). 
\item If dim$(X) = 3$ and $X$ is general hypersurface and $d \geq 6$ (see \cite{M-R-R2} and \cite{R}).
\end{enumerate}

The case of a general hypersurface of low degree in $\p^4$ and $\p^5$ have also been studied by Chiantini and Madonna in \cite{C-M1}, \cite{C-M2}, \cite{C-M3}. 

For rank $ \geq 3$, very few results are known. A result in this direction is by Tadakazu \cite{S} who found a splitting criterion for ACM bundles on a general hypersurface depending on the degree of the hypersurface along with rank and dimension. Fujita \cite{Fuj} proved that any ACM vector bundle satisfying $H^2_*(X, \cE nd(E)) = 0$ splits. 

Here we prove the following splitting criterion for any hypersurface (irrespective of its degree) and a rank $k$ ACM bundle,
\begin{theorem}
Let $E$ be any rank $k$ bundle on a smooth hypersurface $X \subset \p^{n+1}$ with $n \geq 2k + 1$. Assume further that $E$ satisfies the following two conditions,
\begin{enumerate}
\item $H^i_*(X, E) = 0, \,\, i \in \{1, 2, \ldots ,n-1\}$
\item $H^i_*(X, \wedge^m E) = 0, \,\, i = 2m-1,2m,\ldots,k+m \,\, \text{for each} \,\,m \in \{2,\ldots,k-1\}  $
\end{enumerate}

Then $E$ splits.
\end{theorem}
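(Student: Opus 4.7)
The plan is to proceed by induction on the rank $k$, reducing to a rank $k-1$ problem by splitting off a line bundle. For the base case $k = 2$, condition (2) is vacuous (the index set $\{2, \ldots, k-1\}$ is empty), so the hypothesis reduces to $E$ being an ACM bundle of rank $2$ on a hypersurface of dimension $n \geq 5$, and the result then follows from the classical splitting theorems of Kleppe \cite{Kle} and Mohan Kumar--Rao--Ravindra \cite{M-R-R} cited in the introduction.

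For the inductive step, assume the result for rank $k-1$. Via the isomorphism $\wedge^m E^* \cong \wedge^{k-m} E \otimes (\det E)^{-1}$ and Serre duality on $X$ (using $\omega_X \cong \cO_X(d - n - 2)$ with $d = \deg X$), the hypotheses (1) and (2) also yield cohomology vanishings for $\wedge^m E^*$, and combined with the bound $n \geq 2k+1$ one obtains enough vanishing to proceed. The strategy is to pick the smallest $m_0$ with $H^0(X, E(m_0)) \neq 0$ and a nonzero section $s \in H^0(X, E(m_0))$, giving a nontrivial map $\cO_X(-m_0) \to E$, and then to show that $s$ is nowhere vanishing.

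To verify $s$ is nowhere vanishing, I would assume its zero scheme $Z$ is nonempty (regular of codimension $k$, the general case being similar). The Koszul resolution of $\cI_Z$ reads
\begin{equation*}
0 \to (\det E)^{-1}(-k m_0) \to \wedge^{k-1} E^*(-(k-1)m_0) \to \cdots \to E^*(-m_0) \to \cI_Z \to 0,
\end{equation*}
which I would decompose into short exact sequences $0 \to C_{j+1} \to \wedge^j E^*(-j m_0) \to C_j \to 0$ with $C_0 = \cI_Z$ and $C_k = 0$. Chasing cohomology through these with the vanishings established for $\wedge^j E^*$---where the specific ranges in (2) and the bound $n \geq 2k+1$ are essential---should force $H^0(X, \cI_Z(t)) = H^0(X, \cO_X(t))$ for all $t$, contradicting $Z \neq \emptyset$. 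Hence $s$ is nowhere vanishing, so $\cO_X(-m_0) \hookrightarrow E$ is a saturated subbundle with locally free quotient $F$ of rank $k-1$.

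Finally, using $0 \to \cO_X(-m_0) \to E \to F \to 0$ and its exterior power variants $0 \to \wedge^{m-1} F(-m_0) \to \wedge^m E \to \wedge^m F \to 0$, one checks by induction on $m$ that $F$ satisfies the hypotheses of the theorem in rank $k-1$, so by the inductive hypothesis $F \cong \bigoplus_{i} \cO_X(-a_i)$. The extension class of $0 \to \cO_X(-m_0) \to E \to F \to 0$ lies in $\bigoplus_i H^1(X, \cO_X(a_i - m_0))$, which vanishes since $n \geq 3$, so $E$ itself splits. The principal difficulty is the nowhere-vanishing step: the precise ranges in (2) and the bound $n \geq 2k+1$ appear tailored precisely so that the cohomological chase on the Koszul complex of $s$ eliminates all potentially obstructing $H^j(\wedge^j E^*)$ contributions to $H^0(\cI_Z)$; the bookkeeping to propagate (2) from $E$ to $F$ through the wedge power sequences is the other place where the precise indexing of the hypothesis is used.
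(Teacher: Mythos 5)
Your proposal takes a completely different route from the paper, which never produces a section of $E$ at all: the paper restricts a minimal free resolution of $E$ from $\p^{n+1}$ to $X$ to get $0\to G\to F_0\to E\to 0$, uses $\wedge$-$Sym$ sequences to prove $H^i_*(X,Sym^l G)=0$ for $i=l-1,\dots,k$, and then feeds this into a $Sym$-$\wedge$ sequence of index $k$ to force a line-bundle summand of $G$, contradicting minimality of the resolution unless $E$ splits. Your induction on rank via a nowhere-vanishing section has genuine gaps that I do not see how to close with the stated hypotheses. First, an arbitrary nonzero section of minimal twist need not have zero locus of codimension $k$; if $Z(s)$ has excess components the Koszul complex is not exact and the whole chase collapses. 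This is not a removable technicality: Corollary \ref{cor_2} has to \emph{assume} the existence of a section with complete-intersection zero locus for exactly this reason. Second, even when $Z$ is regular of codimension $k$, the chase through $0\to C_{j+1}\to\wedge^{j+1}E^*(-(j+1)m_0)\to C_j\to 0$ requires $H^j_*(X,\wedge^{j+1}E^*)=0$ for $j=1,\dots,k-1$. Via Serre duality this is $H^{n-m+1}_*(X,\wedge^m E)=0$ (with $m=j+1$), and via $\wedge^{j+1}E^*\cong\wedge^{k-j-1}E\otimes(\det E)^{-1}$ it is $H^j_*(X,\wedge^{k-j-1}E)=0$; for $k\ge 4$ and $j=1$ neither index lies in the window $[2m-1,\,k+m]$ of hypothesis (2). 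For instance $k=4$ needs $H^1_*(\wedge^2E)=0$ or $H^{n-1}_*(\wedge^2E)=0$, while (2) only kills $H^i_*(\wedge^2E)$ for $3\le i\le 6$ and $n-1\ge 8$. The windows in (2) are tailored to the paper's symmetric-power computation, not to the Koszul complex of a section.

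Two further problems. The contradiction you aim for does not follow from the chase: with all the vanishing in hand you obtain surjectivity of $H^0(E^*(t-m_0))\to H^0(\cI_Z(t))$, i.e.\ every section of $\cI_Z(t)$ is a contraction against $s$; this does not yield $H^0(\cI_Z(t))=H^0(\cO_X(t))$, and I see no mechanism forcing $Z=\emptyset$ (that would essentially require knowing $c_k(E(m_0))=0$ and global generation, neither of which is available). Finally, in the descent the quotient $F$ in $0\to\cO_X(-m_0)\to E\to F\to 0$ satisfies $H^i_*(F)=0$ only for $1\le i\le n-2$: one has $H^{n-1}(F(t))\cong\ker\bigl(H^n(\cO_X(t-m_0))\to H^n(E(t))\bigr)$, which is nonzero unless the Serre-dual map $H^0(E^*\otimes\omega_X(-t))\to H^0(\omega_X(m_0-t))$ induced by $s$ is surjective --- and there is no reason for $(E(m_0))^*$ to have any sections at all. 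So $F$ need not satisfy hypothesis (1) in rank $k-1$ and the induction does not close. Your base case and the final extension-class computation are fine, but the core of the argument is missing.
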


The conjecture mentioned above predicts that any ACM bundle of rank 3 (resp. rank 4) over a hypersurface in $\p^6$ (resp. $\p^8$) splits. As a corollary to the above mentioned theorem, we prove:
\begin{theorem}[Corollary \ref{cor_1} + Corollary \ref{cor_2}] \label{result_main} Let $E$ be an ACM bundle on a smooth hypersurface $X \subset \p^{n+1}$. Then $E$ splits if,
\begin{enumerate}
\item rank $E = 3$ and $\text{dim}(X) \geq 7$.
\item rank $E = 4$, $\text{dim}(X) \geq 9$ and $E$ (its dual or any of its twists) admits a section with zero locus a complete intersection on $X$ of codimension 4.
\end{enumerate}
\end{theorem}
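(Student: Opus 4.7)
The plan is to verify the two cohomological hypotheses of Theorem~1.1 in each of the two cases. Throughout I use freely that every line bundle on $X$ is a power of $\cO_X(1)$ by Grothendieck--Lefschetz (since $\dim X \geq 3$), that $E^\vee$ is ACM whenever $E$ is (by Serre duality), and the natural identifications $\wedge^{k-1} E \cong E^\vee \otimes \det E$ for any rank $k$ and $\wedge^2 E \cong (\wedge^2 E)^\vee \otimes \det E$ for $k = 4$.

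For part (1), with $k=3$ and $n \geq 7 = 2k+1$, the rank--dimension requirement of Theorem~1.1 holds. Condition (i) is immediate from $E$ being ACM. Condition (ii) asks only for $H^i_*(X, \wedge^2 E) = 0$ with $i \in \{3, 4, 5\}$; but $\wedge^2 E \cong E^\vee \otimes \det E$ is a line-bundle twist of an ACM bundle, hence itself ACM, and the vanishing holds on the full range $1 \leq i \leq n-1$. Theorem~1.1 then applies.

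For part (2), $k=4$ and $n \geq 9$. Condition (i) is again trivial, and the $m=3$ part of condition (ii) (vanishing of $H^i_*(\wedge^3 E)$ for $i = 5, 6, 7$) is handled exactly as in part (1), using ACM-ness of $\wedge^3 E \cong E^\vee \otimes \det E$. The substantive task is the $m=2$ vanishing, $H^i_*(\wedge^2 E) = 0$ for $i \in \{3, 4, 5, 6\}$; this is where the section hypothesis enters. Replacing $E$ by its dual or a twist if necessary, suppose $E$ itself has a section $s$ whose zero locus $Z$ is a codimension-$4$ complete intersection on $X$. Since the codimension of $Z$ equals the rank of $E$, the section is regular and the Koszul complex
\[
0 \to \det E^\vee \to \wedge^3 E^\vee \to \wedge^2 E^\vee \to E^\vee \to \cO_X \to \cO_Z \to 0
\]
is exact. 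I break it into the three short exact sequences $0 \to G_{j+1} \to \wedge^j E^\vee \to G_j \to 0$ for $j = 1, 2, 3$, with $G_1 = \cI_Z$ and $G_4 = \det E^\vee$, and cascade cohomology. The complete-intersection hypothesis makes $\cO_Z$ ACM of dimension $n-4$ and projectively normal, so the ideal sequence $0 \to \cI_Z \to \cO_X \to \cO_Z \to 0$ gives $H^i_*(\cI_Z) = 0$ for $1 \leq i \leq n-4$. The $j=3$ sequence, combined with ACM-ness of $\det E^\vee$ and $\wedge^3 E^\vee \cong E \otimes \det E^\vee$, yields $H^i_*(G_3) = 0$ for $1 \leq i \leq n-2$; the $j=2$ sequence, combined with ACM-ness of $E^\vee$, yields $H^i_*(G_2) = 0$ for $2 \leq i \leq n-3$. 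Feeding these into the remaining sequence $0 \to G_3 \to \wedge^2 E^\vee \to G_2 \to 0$ forces $H^i_*(\wedge^2 E^\vee) = 0$ on $2 \leq i \leq n-3$; twisting by $\det E$ transfers the vanishing to $\wedge^2 E$, and for $n \geq 9$ this range contains $\{3, 4, 5, 6\}$. Theorem~1.1 now delivers the splitting.

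The main obstacle is the endpoint bookkeeping in the Koszul chase: the low-end vanishing $H^1_*(\cI_Z) = 0$ rests on projective normality of $Z$ (automatic for a complete intersection in $\p^{n+1}$), while the high end must propagate vanishing out to $i = 6$, which pins down the precise bound $n \geq 9$ (equivalently $n - 3 \geq 6$). The rest is routine dimension counting on long exact sequences.
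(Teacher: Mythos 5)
Your proposal is correct and follows essentially the same route as the paper: part (1) reduces to the ACM-ness of $\wedge^2 E \cong E^{\vee}\otimes\det E$, and part (2) runs the cohomology chase through the Koszul resolution of $\cO_Z$ for a section with complete-intersection zero locus, exactly as in the paper's Corollaries \ref{cor_1} and \ref{cor_2}. Your version merely makes explicit the endpoint bookkeeping that the paper leaves terse.
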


By a result of Kleiman \cite{Kleim}, one knows that the zero locus of any generic section of a rank $k$ vector bundle is a locally complete intersection (infact nonsingular) of codimension $k$ . For the part (2) of the Theorem, we want any section which corresponds to (global) complete intersection. 

For rank 2 ACM bundles, our method gives another proof for splitting when $n \geq 5$. 

\section{Preliminaries}

We will work over an algebraically closed field of characteristic zero. Let $X \subset \p^{n+1}$ be a hypersurface of degree $d \geq 2$. Let $E$ be a rank $k$ ACM bundle on $X$. We take a minimal (1-step) resolution of $E$ on $\p^{n+1}$,
\begin{eqnarray} \label{E_1_step}
0 \rar \wtil{F_1} {\rar} \wtil{F_0} \rar E \rar 0
\end{eqnarray}

where $\wtil{F_0}$ is direct sum of line bundles on $\p^{n+1}$. By Auslander-Buchsbaum formula, $\wtil{F_1}$ is a bundle and by Horrock's criterion it is also a split bundle on $\p^{n+1}$. 

Restricting \eqref{E_1_step} to $X$, we get,
\begin{eqnarray} \label{E_X}
0 \rar Tor^1_{\p^{n+1}}(E, \cO_X) \rar F_1 \rar F_0 \rar E \rar 0
\end{eqnarray}

where $F_i = \wtil{F_i} \otimes \cO_X$ for $i = 0,1$. To compute the \textit{Tor} term, we tensor the short exact sequence $0 \rar \cO_{\p^{n+1}}(-d) \rar \cO_{\p^{n+1}} \rar \cO_X \rar 0$ with $E$,
$$
0 \rar Tor^1_{\p^{n+1}}(E, \cO_X) \rar E(-d) \rar E \rar E \otimes \cO_{X} \rar 0
$$
The map $E \rar E \otimes \cO_{X}$ is an isomorphism, thus we get $Tor^1_{\p^{n+1}}(E, \cO_X) \cong E(-d)$. Exact sequence \eqref{E_X} breaks up into 2 short exact sequences,
\begin{eqnarray} \label{E_X_1}
0 \rar G \rar F_0 \rar E \rar 0
\end{eqnarray}
\begin{eqnarray} \label{E_X_2}
0 \rar E(-d) \rar F_1 \rar G \rar 0
\end{eqnarray}
Since $H^0_*(X, F_0) \onto H^0_*(X, E)$ is a surjection of graded rings, $H^1_*(X, G) = 0$. It follows that $G$ is also ACM. 

\section{Proof of the main results}

\begin{lemma} \label{G_direct_summand} Let $E$ be any non-split bundle (not necessarily ACM) on a hypersurface $X \subset \p^{n+1}, \, n \geq 3$. Assume further that $H^1_*(X, E^{\vee}) = 0$. 
Let the exact sequence \eqref{E_X_1} be a minimal (1-step) resolution of $E$ on $X$, then $G$ does not admit a line bundle as a direct summand.
\end{lemma}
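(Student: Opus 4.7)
The plan is to assume for contradiction that $G \isom L \oplus G'$ for some line bundle $L$ on $X$, and to use the hypothesis $H^1_*(X, E^{\vee}) = 0$ to promote the abstract summand $L \subset G$ to a genuine split summand of $F_0$, which will then violate minimality. Since $\dim X = n \geq 3$, Grothendieck-Lefschetz gives $L \isom \cO_X(-b)$ for some $b \in \Z$. Let $\pi : G \rar L$ denote the projection determined by the assumed decomposition, viewed as a global section of $\cHom(G, L) = G^{\vee} \tensor L$.

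Because $E$ is locally free, so is $G$, and dualizing \eqref{E_X_1} and twisting by $L$ yields a short exact sequence of locally free sheaves
$$0 \rar E^{\vee} \tensor L \rar F_0^{\vee} \tensor L \rar G^{\vee} \tensor L \rar 0.$$
The associated long exact sequence in cohomology contains
$$H^0(X, F_0^{\vee} \tensor L) \rar H^0(X, G^{\vee} \tensor L) \rar H^1(X, E^{\vee}(-b)),$$
and the last term vanishes by the hypothesis $H^1_*(X, E^{\vee}) = 0$. Hence $\pi$ lifts to a morphism $\sigma : F_0 \rar L$ satisfying $\sigma|_G = \pi$.

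Restricting once more, the composite $L \into G \into F_0 \sta{\sigma}{\rar} L$ equals $\pi|_L = \mathrm{id}_L$, so the inclusion $L \into F_0$ is split. Writing $F_0 = L \oplus F_0'$, the composite $L \into F_0 \rar E$ is zero because $L \subset G = \mathrm{ker}(F_0 \rar E)$, so the surjection $F_0 \rar E$ factors through the projection $F_0 \onto F_0'$. This realises $E$ as a quotient of a strictly smaller direct sum of line bundles than $F_0$, contradicting the minimality of the resolution \eqref{E_X_1}.

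The argument is essentially a one-move lifting computation and I do not expect serious obstacles; the only substantive input is $H^1_*(X, E^{\vee}) = 0$, which appears precisely at the step that promotes $L \subset G$ to a split summand of $F_0$. The non-split hypothesis on $E$ is used only to guarantee $G \neq 0$ so that the claim is non-vacuous, and the hypothesis $n \geq 3$ enters only through Grothendieck-Lefschetz to identify $L$ with a twist of $\cO_X$.
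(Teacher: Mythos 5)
Your proof is correct and arrives at the same contradiction as the paper, but by a more direct route. The paper pushes out \eqref{E_X_1} along the projection $G \rar G'$ to manufacture an auxiliary extension $\eta: 0 \rar L \rar F_0 \rar F'_0 \rar 0$, and then shows $\eta$ splits by chasing its image in $Ext^1(G', L)$ through the sequence $Ext^1(E,L) \rar Ext^1(F'_0,L) \rar Ext^1(G',L)$, the key input being $Ext^1(E, L) \cong H^1(X, E^{\vee} \otimes L) = 0$. You use exactly the same vanishing one step earlier: it makes $Hom(F_0, L) \rar Hom(G, L)$ surjective, so the retraction $\pi: G \rar L$ extends to $\sigma: F_0 \rar L$, which exhibits $L$ directly as a split summand of $F_0$ with no pushout needed. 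The two arguments are two faces of the same coin (the obstruction to extending $\pi$ is precisely the image of the extension class of \eqref{E_X_1} under $\pi_*$), but yours is shorter and keeps the auxiliary bundle $F'_0$ out of the picture until the final step. One small point you elide and the paper makes explicit: to conclude that $F'_0 = \ker \sigma$ is again a direct sum of line bundles, so that $0 \rar G' \rar F'_0 \rar E \rar 0$ genuinely contradicts minimality, you should invoke the Krull--Schmidt theorem for vector bundles \cite{At}, since a priori a direct summand of a split bundle need not itself be split. Your closing observations are also accurate: the non-split hypothesis only rules out the degenerate case $G = 0$, and $n \geq 3$ is used solely so that Grothendieck--Lefschetz identifies $L$ with some $\cO_X(-b)$, which is what lets $H^1_*(X, E^{\vee}) = 0$ be applied to $E^{\vee} \otimes L$.
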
 
\begin{proof} We will assume the contrary. Let $G = G' \oplus L$ where $L$ is a line bundle. By Grothendieck-Lefschetz theorem, $L$ is of the form $\cO_X(a)$. There exists following pushout diagram,
\begin{equation} \label{dgm_G_direct_summand}
\begin{gathered} \xymatrix{0 \ar[r]& G' \ar[r] & F'_0 \ar[r] & E \ar[r]& 0 \\ 0 \ar[r] & G \ar[r] \ar[u] & F_0 \ar[r] \ar[u]& E \ar[r] \ar@{=}[u]& 0&
}\end{gathered}
\end{equation}
where $G \rar G'$ is the natural projection and $F'_0$ is the pushout. Completion of the diagram \eqref{dgm_G_direct_summand} gives $\eta: 0 \rar L \rar F_0 \rar F'_0 \rar 0$. Applying $Hom_X(-, L)$ to the top horizontal sequence gives, $$
\cdots \rar Ext^1(E, L) \rar Ext^1(F'_0, L) \rar Ext^1(G', L) \rar \cdots
$$
In the above sequence $\eta \mapsto \eta'$ where $\eta': 0 \rar L \rar G \rar G' \rar 0$ is split. By assumption $Ext^1(E, L) \cong H^1(X, E^{\vee} \otimes L) = 0$, thus $\eta$ splits. Therefore $F'_0$ is a direct sum of line bundles of rank $r-1$, by Krull-Schmidt theorem \cite{At}.  This implies that $0 \rar G' \rar F'_0 \rar E \rar 0$ is a 1-step resolution of $E$ which contradicts the minimality of the resolution \eqref{E_X_1}. 
\end{proof}

On any projective variety $Z$, for a short exact sequence of vector bundles $0 \rar \cF_0 \rar \cF_1 \rar \cF_2 \rar 0$ and any positive integer $k$, there exists a resolution of $k$-th symmetric power  of $\cF_2$, 
$$0 \rar \wedge^k(\cF_0) \rar \cdots \rar \wedge^{k-i}(\cF_0) \otimes Sym^i \cF_1 \rar \cdots Sym^k \cF_1 \rar Sym^k \cF_2 \rar 0$$

We will call this resolution the \textit{$\wedge-Sym$ sequence of index $k$} \footnote[1]{We were unable to find any standard terminology in the literature for the given resolution.} associated to the given short exact sequence. In fact, for any map $\phi: F_0 \rar F_1$ of free $R$-modules (where $R$ is a commutative ring), one considers $\cS$ the symmetric algebra on $F_1$. Fix a free basis of $F_1$ and assign degree 1 to the elements of the basis. Let $F' = \cS \otimes F_0(-1)$ (as $\cS$-modules) then there exists a natural $\cS$-module morphism of degree 0, $\phi': F' \rar \cS$. If we consider the Koszul resolution determined by the map $\phi'$ over $\cS$ then the sequence above is the degree $k$ strand of this Koszul resolution. 

For further details, see appendix A2 of \cite{Eisen} or for an approach via Schur complexes see Ch. 2 of \cite{Wey}. 

There exists a similar resolution of $k$-th exterior power of $\cF_2$ (by interchanging symmetric product and wedge product) which we will call \textit{$Sym-\wedge$ sequence of index $k$} associated to the given sequence. 

$$0 \rar Sym^k(\cF_0) \rar \cdots \rar Sym^{k-i}(\cF_0) \otimes \wedge^i \cF_1 \rar \cdots \rar \wedge^k \cF_1 \rar \wedge^k \cF_2 \rar 0$$

We will now prove a result from which Theorem \ref{result_main} will follow.

\begin{theorem} \label{thm_main} Let $E$ be any rank $k$ bundle on a smooth hypersurface $X \subset \p^{n+1}$ with $n \geq 2k + 1$. Assume further that $E$ satisfies the following two conditions,
\begin{enumerate}
\item $H^i_*(X, E) = 0, \,\, i \in \{1, 2, \ldots ,n-1\}$
\item $H^i_*(X, \wedge^m E) = 0, \,\, i = 2m-1,2m,\ldots,k+m \,\, \text{for each} \,\,m \in \{2,\ldots,k-1\}  $
\end{enumerate}

Then $E$ splits.
\end{theorem}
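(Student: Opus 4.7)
The plan is to argue by contradiction, so assume $E$ does not split. Condition (1) says $E$ is ACM, and since $n \geq 2k+1 \geq 5$, Serre duality on $X$ yields $H^1_*(X, E^\vee) = 0$. Lemma \ref{G_direct_summand} applied to the minimal one-step resolution \eqref{E_X_1} then guarantees that $G$ admits no line bundle direct summand. From the preliminaries $G$ is also ACM.

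For each $m \in \{2, \ldots, k\}$ I will exploit the $Sym$--$\wedge$ sequence of index $m$ associated to \eqref{E_X_1},
$$0 \to Sym^m G \to Sym^{m-1} G \otimes F_0 \to \cdots \to \wedge^m F_0 \to \wedge^m E \to 0,$$
together with its $\wedge$--$Sym$ companion resolving $Sym^m E$, and the analogous pair of sequences obtained from \eqref{E_X_2}. Since $F_0$ and $F_1$ are direct sums of line bundles, all their exterior and symmetric powers have no intermediate cohomology on $X$, and tensoring with such bundles preserves the ACM vanishing of $G$. I then break each resolution into short exact sequences and chase cohomology: the vanishing of $H^i_*(\wedge^m E)$ on the range $\{2m-1,\ldots,k+m\}$ supplied by condition (2), together with the ACM vanishing of $E$ from (1), propagates stage by stage into vanishing of $H^i_*(\wedge^{m'}G)$ and $H^i_*(Sym^{m'}G)$ on appropriately shifted ranges. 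The specific intervals in (2) are tailored exactly so that the shifts line up.

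Pushing the argument up to $m = k$ exploits the fact that $\wedge^k E = \det E$ is a line bundle and so has no intermediate cohomology at all. The accumulated vanishing will eventually force $G$ to admit a line bundle direct summand---concretely, by producing a map from a line bundle into (or out of) $G$ whose $\text{Ext}^1$ obstruction vanishes, in the style of the pushout argument used to prove Lemma \ref{G_direct_summand}. This contradicts the first paragraph and completes the proof.

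The main obstacle is the bookkeeping of the cohomology chase: each resolution has $m+1$ nontrivial terms, the middle entries involve mixed $\wedge^{j}G \otimes Sym^{\ell}F_0$ factors whose cohomology is only partially known, and the vanishings for $\wedge^{m'}G$ and $Sym^{m'}G$ must be established in the right order so that every stage has the input it needs. The hypothesis $n \geq 2k+1$ is used precisely to keep every relevant degree $\{2m-1, \ldots, k+m\}$ strictly inside $[1, n-1]$, so that the split summands of $F_0$ and $F_1$ continue to annihilate intermediate cohomology throughout the chase.
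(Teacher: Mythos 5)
Your overall architecture is the right one and matches the paper's: minimal resolution \eqref{E_X_1}, Lemma \ref{G_direct_summand} as the endgame, the two Koszul-type sequences as the engine, and $\wedge^k E$ being a line bundle as the terminal input. Your observation that $H^1_*(X,E^{\vee})=0$ follows from hypothesis (1) by Serre duality is a genuine point in your favour --- the paper applies Lemma \ref{G_direct_summand} without spelling this out. However, the proposal has a real gap: the entire technical content of the proof is the cohomology chase, and you explicitly defer it as ``the main obstacle'' without specifying which resolution controls which object. The correct pairing, which your sketch does not pin down, is: apply the $\wedge$--$Sym$ sequence of each index $l\in\{2,\dots,k\}$ to \eqref{E_X_2} (not \eqref{E_X_1}), so that the intermediate terms are $\wedge^{l-j}E(-d)\otimes Sym^jF_1$, whose cohomology is exactly what hypotheses (1) and (2) control; an induction on $j$ then yields $H^i_*(X,Sym^lG)=0$ for $i=l-1,\dots,k$. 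Your suggestion of also using the companion sequence resolving $Sym^mE$ would require vanishing for $H^i_*(Sym^mE)$, which is not among the hypotheses and is not available; that branch of your plan would stall.

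The second gap is in the extraction of a line-bundle summand of $G$. You propose to produce a map from a line bundle into or out of $G$ and kill an $Ext^1$ obstruction ``in the style of'' Lemma \ref{G_direct_summand}; the actual mechanism is different and more concrete. One takes the $Sym$--$\wedge$ sequence of index $k$ for \eqref{E_X_1}, filters it by cokernels $M_j$, and uses the $Sym^lG$ vanishings to get $H^1_*(X,M_{k-1})=H^1_*(X,M_{k-2})=0$. Then $0\rar M_{k-1}\rar\wedge^kF_0\rar\wedge^kE\rar 0$ splits because $\wedge^kE$ is a line bundle, so $M_{k-1}$ is itself a direct sum of line bundles; then $0\rar M_{k-2}\rar G\otimes\wedge^{k-1}F_0\rar M_{k-1}\rar 0$ splits for the same reason, exhibiting a line bundle as a direct summand of $G\otimes\wedge^{k-1}F_0$, and Krull--Schmidt transfers this to $G$ itself. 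This is where the contradiction with Lemma \ref{G_direct_summand} actually comes from. Your reading of the role of $n\geq 2k+1$ is essentially correct (it is what makes the top of the range $i=2k-1,2k$ for $\wedge^kE$ fall inside $\{1,\dots,n-1\}$), but as written the proposal is a plan for a proof rather than a proof, and the two points above are where it would need to be repaired before it could be completed.
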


Despite the odd assumptions, the proof is very simple and we just use hypothesis of the theorem in $\wedge-Sym$ sequence for various indices to prove certain cohomological vanishings \eqref{S^m_G}, which is then used in a $Sym-\wedge$ sequence to prove the theorem.

\begin{proof}[Proof of theorem \ref{thm_main}] We write $\wedge-Sym$ sequence of some index $l \in \{2,\ldots,k\}$ for the short exact sequence \eqref{E_X_2}, 

$
\hspace{25mm} 0 \rar \wedge^l E(-d) \rar \wedge^{l-1} E(-d) \otimes F_1 \rar \cdots$

$\hspace{35mm} \cdots \rar E(-d) \otimes Sym^{l-1} F_1 \rar Sym^l F_1 \rar Sym^l G \rar 0$

This breaks up into short exact sequences,
$$
0 \rar \cG_{j-1,l} \rar \wedge^{l-j} E(-d) \otimes Sym^j F_1 \rar \cG_{j,l} \rar 0
$$
where $\cG_{0,l} = \wedge^l E(-d)$, $\cG_{j,l}$ is defined inductively for $j = 1, \ldots l-1$ and $\cG_{l,l} = Sym^l \,G$. For all $j \in \{0 ,\ldots l\}$, we claim $H^i_*(X,\cG_{j,l}) = 0$ for $i = 2l - j -1, 2l - j,\ldots k+l-j$. The case $j = 0$ is true by assumption in the theorem (putting $m = l$). For $j = t$, $$H^i(\wedge^{l-t} E(-d) \otimes Sym^t F_1) \rar H^i(\cG{t,l}) \rar  H^{i+1}(\cG_{t-1, l})$$

By induction, $H^{i+1}(\cG_{t-1,l}) = 0$ for $i+1 = \{2l-t, 2l-t+1, \ldots k+l-t+1\}$. This along with the assumption in the theorem and the fact that $F_1$ is split, proves the claim. Thus, 
\begin{eqnarray} \label{S^m_G}
H^i_*(X, Sym^l G) = 0 \,\,\text{for} \,\,i = l-1,l, \ldots,k
\end{eqnarray}

Now we look at $Sym-\wedge$ sequence of the index $k(=\text{rank}\,E)$ for the sequence \eqref{E_X_2}, 
$$
0 \rar Sym^k G \rar Sym^{k-1} G \otimes F_0 \rar \cdots G \otimes \wedge^{k-1} F_0 \rar \wedge^k F_0 \rar \wedge^k E \rar 0
$$
This breaks up into short exact sequences,
\begin{eqnarray} \label{M_k_defn}
0 \rar M_{j-1} \rar Sym^{k-j} G \otimes \wedge^j F_0 \rar M_j \rar 0
\end{eqnarray}
where $M_0 = Sym^k G$ and $M_j$ is defined inductively for $j = 1, \ldots k$ as 
$$M_j = \text{coker}(M_{j-1} \rar Sym^{k-j} G \otimes \wedge^j F_0)$$ 

\noindent Note that $M_k = \wedge^k E = \cO_X(e)$ for some $e \in \Z$. Using the vanishing given by \eqref{S^m_G} in sequence \eqref{M_k_defn} (and the fact that $F_0$ are split bundles), 
$$H^i_*(X, M_j) = 0 \,\,\text{for} \,\,i = k-j-1, k-j$$

\noindent Therefore the short exact sequence $0 \rar M_{k-1} \rar \wedge^k F_0 \rar \wedge^k E \rar 0$ splits. In particular, $M_{k-1}$ splits. This implies that the following sequence splits,
$$
0 \rar M_{k-2} \rar G \otimes \wedge^{k-1} F_0 \rar M_{k-1} \rar 0
$$
In particular, $G$ has a line bundle as a direct summand. Thus by lemma \ref{G_direct_summand}, $E$ splits.

\end{proof}

\begin{corollary} \label{cor_1} Let $E$ be a rank 3 ACM bundle on a smooth hypersurface $X$ with $\text{dim}(X) \geq 7$, then $E$ splits.
\end{corollary}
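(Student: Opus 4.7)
The plan is to obtain the conclusion as a direct application of Theorem \ref{thm_main} with $k = 3$. The dimension bound $n \geq 2k+1 = 7$ matches the hypothesis $\dim X \geq 7$, and condition (1) of Theorem \ref{thm_main}, namely $H^i_*(X,E) = 0$ for $1 \leq i \leq n-1$, is automatic because $E$ is ACM on $X$ of dimension $n$. So the only substantive step is verifying condition (2).

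For $k = 3$, the index set $\{2,\ldots,k-1\}$ collapses to the single value $m = 2$, so the requirement is $H^i_*(X, \wedge^2 E) = 0$ for $i \in \{3,4,5\}$. I would exploit the rank-$3$ identification $\wedge^2 E \cong E^\vee \otimes \det E$, which reduces the problem to a cohomological statement about $E^\vee$: once I know $E^\vee$ is ACM, then $\wedge^2 E$, being a line-bundle twist of an ACM bundle, is also ACM, and in particular has vanishing intermediate cohomology in the required range $\{3,4,5\} \subset \{1,\ldots,n-1\}$ since $n \geq 7$.

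To show $E^\vee$ is ACM I would invoke Serre duality on the smooth hypersurface $X \subset \p^{n+1}$ of degree $d$, where $\omega_X \cong \cO_X(d-n-2)$. This yields
$$H^i(X, E^\vee(m)) \cong H^{n-i}(X, E(-m+d-n-2))^\vee,$$
and the constraint $0 < i < n$ gives $0 < n-i < n$, so the right-hand side vanishes by the ACM hypothesis on $E$. Thus $H^i_*(X, E^\vee) = 0$ for $0 < i < n$, confirming that $E^\vee$ (and hence $\wedge^2 E$) is ACM, so condition (2) of Theorem \ref{thm_main} holds and the splitting follows.

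There is no real obstacle here: the content is already packaged in Theorem \ref{thm_main}, and the corollary merely checks that an ACM bundle of rank $3$ on a sufficiently high-dimensional hypersurface automatically satisfies its hypotheses, the only mildly nontrivial point being the rank-$3$ duality identification $\wedge^2 E \cong E^\vee \otimes \det E$.
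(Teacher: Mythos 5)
Your proposal is correct and follows the same route as the paper: the paper's proof simply notes that $\wedge^i E$ is ACM for $i=1,2,3$ and invokes Theorem \ref{thm_main}, and your argument via $\wedge^2 E \cong E^{\vee} \otimes \det E$ together with Serre duality is precisely the justification the paper leaves implicit. No discrepancies.
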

\begin{proof} We note that $\wedge^i E$ is ACM when $i = 1,2,3$. In particular, both the assumptions of theorem \ref{thm_main} are satisfied. Thus $E$ splits.
\end{proof}

\begin{corollary} \label{cor_2} Let $E$ be a rank 4 ACM bundle on a smooth hypersurface $X$ with $\text{dim}(X) \geq 9$ and $E$ (or any of its twists) admits a section with zero locus a complete intersection on $X$ of codimension 4, then $E$ splits.
\end{corollary}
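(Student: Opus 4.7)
The plan is to reduce Corollary \ref{cor_2} to Theorem \ref{thm_main} applied with $k = 4$. The dimension bound $n \geq 9 = 2k + 1$ matches, and since $E$ is ACM, the first hypothesis of Theorem \ref{thm_main} is automatic. So the task is to verify only the exterior power vanishings: $H^i_*(X, \wedge^2 E) = 0$ for $i \in \{3, 4, 5, 6\}$, and $H^i_*(X, \wedge^3 E) = 0$ for $i \in \{5, 6, 7\}$.

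The case $m = 3$ is essentially free. For a rank $4$ bundle one has the canonical isomorphism $\wedge^3 E \cong E^{\vee} \otimes \det E$. Serre duality on $X$ shows that ACM is preserved under dualization, and twisting by a line bundle obviously preserves it, so $\wedge^3 E$ is itself ACM. In particular $H^i_*(X, \wedge^3 E) = 0$ for all $0 < i < n$.

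For $m = 2$ the complete intersection hypothesis enters. Splitting of $E$ is invariant under dualization and twisting, so after replacing $E$ by $E^{\vee}$ or by a suitable twist if necessary, I may assume $E$ itself admits a section $s \colon \cO_X \to E$ whose zero locus $Y$ is a global complete intersection of codimension $4$ in $X$. The Koszul complex of $s$ is then a resolution
$$ 0 \rar \wedge^4 E^{\vee} \rar \wedge^3 E^{\vee} \rar \wedge^2 E^{\vee} \rar E^{\vee} \rar \cO_X \rar \cO_Y \rar 0. $$
Since $Y$ is a global complete intersection of codimension $5$ in $\p^{n+1}$ it is ACM, so $H^i_*(X, \cO_Y) = 0$ for $0 < i < n - 4$. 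Combined with Lefschetz vanishing on $X$ and the ideal sequence $0 \rar \cI_Y \rar \cO_X \rar \cO_Y \rar 0$, this gives $H^i_*(X, \cI_Y) = 0$ for $2 \leq i \leq n - 4$.

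The last step is to break the Koszul resolution into the three short exact sequences
$$ 0 \rar \wedge^4 E^{\vee} \rar \wedge^3 E^{\vee} \rar C_2 \rar 0, \quad 0 \rar C_2 \rar \wedge^2 E^{\vee} \rar C_1 \rar 0, \quad 0 \rar C_1 \rar E^{\vee} \rar \cI_Y \rar 0, $$
and chase cohomology. Using that $\wedge^4 E^{\vee}$ is a line bundle and $\wedge^3 E^{\vee} \cong E \otimes \det E^{\vee}$ is ACM yields $H^i_*(X, C_2) = 0$ for $1 \leq i \leq n - 2$. Using ACM of $E^{\vee}$ together with the vanishing of $H^i_*(X, \cI_Y)$ yields $H^i_*(X, C_1) = 0$ for $3 \leq i \leq n - 3$. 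The middle sequence then forces $H^i_*(X, \wedge^2 E^{\vee}) = 0$ for $3 \leq i \leq n - 3$, and since $\wedge^2 E \cong \wedge^2 E^{\vee} \otimes \det E$ the same range holds for $\wedge^2 E$. The bound $n \geq 9$ is exactly what is needed so that $\{3, 4, 5, 6\} \subset [3, n - 3]$; the only real obstacle is this bookkeeping across the three long exact sequences, after which Theorem \ref{thm_main} supplies the splitting.
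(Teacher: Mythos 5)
Your proposal is correct and follows essentially the same route as the paper: reduce to Theorem \ref{thm_main} with $k=4$, use $\wedge^3 E \cong E^{\vee}\otimes \det E$ to dispose of the $m=3$ vanishing, and extract the $m=2$ vanishing from the Koszul resolution of the complete-intersection section together with the ACM property of $\cO_Y$ and of $\wedge^i E$ for $i=1,3,4$. Your version merely makes explicit the cohomology chase through the truncated Koszul sequences that the paper leaves implicit.
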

\begin{proof} By theorem \ref{thm_main}, $E$ splits, if we can show that $H^i_*(X, \wedge^2 E) = 0$ for $i = 3,4,5,6$. Since $E$ splits $\Leftrightarrow E^{\vee}(m)$ splits for some $m \in \Z$, so we can assume that $E^{\vee}$ is globally generated and replace $E$ by $E^{\vee}$ (which is again rank 4 ACM). 

Suppose we are given any section $s \in H^0(X, E)$ such that the zero locus $Z(s)$ is a complete intersection of codimension 4 on $X$. This implies that there exists a resolution of $\cO_Z$ (see \cite{Eisen}, pp. 448),
$$
0 \rar \wedge^4 E \rar \wedge^3 E \rar \wedge^2 E \rar E \rar \cO_X \rar \cO_Z \rar 0
$$
We note that $Z$ is a complete intersection in $X$ (and hence in $\p^n$) of dimension $5$. In particular, $H^i_*(X, \cO_Z) = 0$ when $i = 1,2,3,4$. Using this along with the fact that $\wedge^i E$ is arithmetically Cohen-Macaulay for $i = 1,3,4$ we get that $H^i_*(X, \wedge^2 E) = 0$ for $i = 3,4,5,6$.
\end{proof}

\textit{Remark:} It is easy to verify the hypothesis of theorem \ref{thm_main} for any rank 2 ACM bundle when $n \geq 5$ which provides another proof for this well known splitting result.

\section{Acknowledgement}

We thank Suresh Nayak for outlining a proof for
the $\wedge-Sym$ and $Sym-\wedge$ sequences. We
thank Jishnu Biswas and G.V Ravindra for providing support and motivation. We thank
the referee for various feedbacks in improving the exposition.


\begin{thebibliography}{}

\bibitem{At} Michael F. Atiyah, \textit{On the Krull-Schmidt theorem with application to sheaves}, Bulletin de la S.M.F., tome 84 (1956), 307-317.
\bibitem{BGS} R.-O. Buchweitz, G.-M. Greuel, and F.-O. Schreyer, \textit{Cohen-Macaulay modules on hypersurface singularities II}, Inv. Math. 88 (1987), 165-182.
\bibitem{C-H} M. Casanellas and R. Hartshorne, \textit{ACM bundles on cubic surfaces}, J. Eur. Math. Soc. 13 (2011), 709-731. 
\bibitem{C-M1} L. Chiantini and C. Madonna, \textit{ACM bundles on a general quintic threefold}, Matematiche (Catania) 55(2000), no. 2 (2002), 239-258.
\bibitem{C-M2} L. Chiantini and C. Madonna, \textit{A splitting criterion for rank 2 bundles on a general sextic threefold}, Internat. J. Math. 15 (2004), no. 4, 341-359.
\bibitem{C-M3} L. Chiantini and C. Madonna, \textit{ACM bundles on a general hypersurfaces in $\p^5$ of low degree}, Collect. Math. 56 (2005), no. 1, 85-96.
\bibitem{SGA7II} P. Deligne, N. Katz, \textit{S\'{e}minaire de G\'{e}om\'{e}trie Alg\'{e}brique du Bois-Marie - 1967-1969. Groupes de monodromie en g\'{e}om\'{e}trie alg\'{e}brique. II,} LNM 340 (1973), Springer-Verlag .  
\bibitem{Eisen} D. Eisenbud, \textit{Commutative algebra with a view toward algebraic geometry}, Springer-Verlag (1995). 
\bibitem{Hart} R. Hartshorne, \textit{Ample subvarieties of algebraic varieties}, LNM 156, Springer-Verlag (1970). 
\bibitem{Hor} G. Horrocks, \textit{Vector bundles on the punctured spectrum of a local ring}, Proc. London Math. Soc. 14 (1964), 689-713. 
\bibitem{Fuj} T. Fujita, \textit{Vector bundles on ample divisors}, J. Math. Soc. Japan Volume 33, Number 3 (1981), 405-414.
\bibitem{Kleim} Steven L. Kleiman, \textit{Geometry on grassmannians and applications to splitting bundles and smoothing cycles}, Publ. Math. IHES, Volume 36, Issue 1, (1969) 281-297.
\bibitem{Kle} H. Kleppe, \textit{Deformation of schemes defined by vanishing of pfaffians}, Jour. of algebra 53 (1978), 84-92.
\bibitem{Kno} H. Kn\"{o}rrer, \textit{Cohen-Macaulay modules on hypersurface singularities I}, Inv. Math. 88 (1987), 153-164.
\bibitem{M-R-R} N. Mohan Kumar, A.P. Rao and G.V. Ravindra, \textit{Arithmetically Cohen-Macaulay bundles on hypersurfaces}, Commentarii Mathematici Helvetici, 82 (2007), No. 4, 829--843.
\bibitem{M-R-R2} N. Mohan Kumar, A.P. Rao and G.V. Ravindra, \textit{Arithmetically Cohen-Macaulay bundles on three dimensional hypersurfaces}, Int. Math. Res. Not. IMRN (2007), No. 8, Art. ID rnm025, 11pp.
\bibitem{M-R-R3} N. Mohan Kumar, A.P. Rao and G.V Ravindra, \textit{On codimension two subvarieties in hypersurfaces}, Motives and Algebraic Cycles: A Celebration in honour of Spencer Bloch, Fields Institute Communications vol. 56, 167--174, eds. Rob de Jeu and James Lewis.
\bibitem{R} G.V Ravindra, \textit{Curves on threefolds and a conjecture of Griffiths-Harris}, Math. Ann. 345 (2009), 731-748.
\bibitem{S} S. Tadakazu, \textit{A sufficient condition for splitting of arithmetically Cohen-Macaulay bundles on general hypersurfaces}, Comm. Algebra 38 (2010), no. 5, 1633-1639. 
\bibitem{Wey} J. M. Weyman, \textit{Cohomology of vector bundles and syzygies}, Cambridge tracts in Mathematics, 149 (2003).
\end {thebibliography}

\end{document}